\documentclass[pdflatex,sn-mathphys-num]{sn-jnl}

\usepackage{graphicx}%
\usepackage{multirow}%
\usepackage{amsmath,amssymb,amsfonts}%
\usepackage{amsthm}%
\usepackage{mathrsfs}%
\usepackage[title]{appendix}%
\usepackage{xcolor}%
\usepackage{textcomp}%
\usepackage{manyfoot}%
\usepackage{booktabs}%
\usepackage{algorithm}%
\usepackage{algorithmicx}%
\usepackage{algpseudocode}%
\usepackage{listings}%

\theoremstyle{thmstyleone}%
\theoremstyle{thmstyletwo}%

\theoremstyle{thmstylethree}%

\newtheorem{defi}{Definition}[section]
\newtheorem{theor}[defi]{Theorem}

\newtheorem{lem}[defi]{Lemma}

\makeatletter
\newcommand{\Jac}{\mathop{\operator@font Jac}}
\newcommand{\rang}{\mathop{\operator@font rang}}
\newcommand{\sol}{\mathop{\operator@font sol}}
\newcommand{\argmin}{\mathop{\operator@font argmin}}
\makeatother

\begin{document}
	
\title[Dimensional reduction associated with unchanged-direction trajectories in flat fiber Robertson-Walker spacetimes]{Dimensional reduction associated with unchanged-direction trajectories in flat fiber Robertson-Walker spacetimes}


\author[1]{\fnm{Daniel} \sur{de la Fuente}}\email{fuentedaniel@uniovi.es}

\author[2]{\fnm{Rafael M.} \sur{Rubio}}\email{rmrubio@uco.es}
\equalcont{These authors contributed equally to this work.}

\author*[2]{\fnm{Jose} \sur{Torrente-Teruel}}\email{jtorrente@uco.es}
\equalcont{These authors contributed equally to this work.}

\affil[1]{\orgdiv{Departament of Mathematics}, \orgname{University of Oviedo}, \orgaddress{\street{Escuela Polit\'ecnica de Ingenier\'ia}, \city{Gij\'on}, \postcode{33003}, \country{Spain}}}

\affil*[2]{\orgdiv{Departamento de Matem\'aticas}, \orgname{University of C\'ordoba}, \orgaddress{\street{Campus de Rabanales}, \city{C\'ordoba}, \postcode{14071}, \country{Spain}}}


\abstract{In this article, we establish a new dimensional reduction result in (Galilean or Lorentzian) Robertson--Walker spacetimes with Euclidean fiber. In this context, we prove that every unchanged-direction observer can be embedded into a three-dimensional totally geodesic timelike submanifold.}

\keywords{Robertson-Walker spacetime, dimensional reduction, unchanged-direction trajectory, Galilean spacetime, Lorentzian manifold.}



\maketitle

\section{Introduction}
It is well known that General Relativity provides the most appropriate theoretical framework to describe the geo\-metry of spacetime and the geometric nature of gravitation, in which Lorentzian geometry constitutes the mathema\-tical framework. Since its inception at the beginning of the twentieth century, its predictions have been continuously confirmed and its foundations progressively consolidated. Newton-Cartan theory was developed a few years later \cite{Cartan1,Cartan2}, providing a geometrical formulation of spacetime and Newtonian gravity. Despite being incompatible with physical experiments involving velocities close to the speed of light, it remains useful and practical for describing most physical phenomena, largely due to its intuitive nature and operational simplicity. Robertson--Walker cosmological models are considered as a benchmark in both settings \cite{ONeill1983,Muler1983} (see also \cite{AliasRomeroSanchez1995,GGRW}), describing the large-scale evolution of the Universe under the general assumptions of homogeneity and isotropy, according to the cosmological principle. Although both theories admit an expanding (or contracting) behaviour of spacetime, they are not equivalent; for instance, flat restspaces appear naturally and intrinsically in the Newtonian formulation, while this is not the general case in their relativistic counterpart.

In both Galilean and Lorentzian models, the trajectories of galaxies (or, more precisely, of the constituent particles) are encoded by curves in spacetime. Among these, trajectories with vanishing proper acceleration (geodesics) are crucial, as they model free falling particles. However, accelerated trajectories are relevant, given that forced systems emerge in a variety of contexts, including the measurement of specific gravitational effects and the analysis of rocket travel. In this regard, one of the most surprising theoretical discoveries is the Unruh effect \cite{Unruh1976}. In our case, we focus on a generalization of uniformly accelerated trajectories \cite{UAM, GalileanUA}, known as unchanged-direction (UD) motion, which correspond to observers measuring a proper acceleration with constant (parallel) direction \cite{UDM, GalileanUD}. 

In this article, we aim to establish an Erbacher-type dimensional reduction of UD observers immersed in a Robertson--Walker spacetime with flat fiber (i.e., isometric to an open subset of Euclidean space). We continue the work presented in \cite{DaniRafaJose_Erbacher}, where such a reduction was proved in product spacetimes, so that all spatial slices had the \emph{same} constant sectional curvature. This assumption provided sufficient symmetry to obtain a dimensional reduction: any UD observer can be embedded into a totally geodesic timelike $3$-dimensional submanifold. In the present work, we prove an analogous statement for Robertson--Walker spacetimes with fiber $\mathbb{R}^n$. In this case, they also decompose as a product manifold, but the constant sectional curvature may differ between restspaces, thereby breaking the symmetry that was crucial in the proofs of both~\cite{DaniRafaJose_Erbacher} and Erbacher's original article~\cite{Erbacher1971} for semi-Riemannian space forms. Moreover, whereas in~\cite{DaniRafaJose_Erbacher} the Lorentzian and Galilean cases were treated in a parallel manner, here they diverge and require separate treatments (see Section \ref{section3}). Remarkably, our contribution relate again to the dual statement of the main result in both Galilean and Lorentzian frameworks.

In the following, the main result of this letter is stated. From a physical point of view, this implies that the study of the kinematics of a particle following an unchanged-direction trajectory can be reduced to a spacetime of only three dimensions:

\begin{theor}\label{maintheorem}
	In any (Galilean or Lorentzian) Robertson--Walker spacetime with flat fiber, every unchanged-direction observer can be embedded into a totally geodesic timelike submanifold of at most three dimensions.
	
\end{theor}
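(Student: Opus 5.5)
We work with $M=I\times_f\mathbb{R}^n$, with metric $g=\varepsilon\,dt^2+f(t)^2\langle\,,\rangle$ in the Lorentzian case ($\varepsilon=-1$). In the Galilean case we take the absolute clock $dt$ together with the degenerate structure whose spatial metric on the slices $\{t\}\times\mathbb{R}^n$ is $f(t)^2\langle\,,\rangle$, and whose compatible connection is the Robertson--Walker Galilean one. The candidate submanifolds are built from affine planes of the fiber. For an affine subspace $\Pi\subset\mathbb{R}^n$ we set $N_\Pi=I\times\Pi$. This is totally geodesic in both settings. To see this, let $\mathcal{T}$ be the time direction ($\partial_t$ in the Lorentzian case, the observer field of the RW structure in the Galilean case) and note the standard warped-product formulas
$$\nabla_XY=\nabla^0_XY+\frac{f'}{f}\,(\text{terms in }\mathcal{T}),\qquad \nabla_X\partial_t=\nabla_{\partial_t}X=\frac{f'}{f}X$$
for $X,Y$ lifts from the fiber. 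Hence vector fields tangent to $I\times\Pi$ have covariant derivatives tangent to $I\times\Pi$, because $\Pi$ is totally geodesic in flat $\mathbb{R}^n$. It is therefore enough to show that every UD observer $\gamma(s)=(t(s),x(s))$ has spatial projection $x(s)$ contained in an affine line or plane $\Pi$ of dimension at most $2$. Then $\gamma\subset I\times\Pi$, which has dimension at most $3$ and is timelike.

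\textbf{Step 1 (decomposing the equation).} Write $\gamma'=t'\partial_t+x'$ and use the formulas above to split $\nabla_{\gamma'}\gamma'$ into its $\partial_t$ part and its fiber part. The fiber part is $x''+2\frac{f'}{f}t'x'$, with $x''$ the Euclidean acceleration. The UD condition says that the acceleration $A$ satisfies $A=a(s)\,E(s)$ with $E$ parallel along $\gamma$, or at least $\nabla_{\gamma'}A\wedge A=0$ in the version used in \cite{UDM,GalileanUD}. We then compute parallel transport along $\gamma$. Writing $E=e_0\partial_t+e$, the equation becomes
$$e'+\frac{f'}{f}\bigl(t'e+e_0x'\bigr)=0$$
for the fiber part, together with a scalar equation for $e_0$ (in the Galilean case $e_0$ is constant and equals $0$, since the acceleration is spatial).

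\textbf{Step 2 (planarity).} Set $V=\mathrm{span}\{x'(s_0),e(s_0)\}\subset\mathbb{R}^n$. The fiber equations form a linear ODE system in the pair $(x',e)$ whose coefficients are scalar functions of $s$: the geodesic-like equation expresses $x''$ as a combination of $x'$ and $e$, and the transport equation expresses $e'$ as a combination of $e$ and $x'$. By uniqueness, a solution starting in $V\times V$ stays in $V\times V$. Hence $x'(s)\in V$ for all $s$, and $x(s)\in x(s_0)+V=:\Pi$, with $\dim\Pi\le2$. If $x'$ and $e$ are parallel at $s_0$ we get a line, and the submanifold has dimension $2$ (or $1$ if $x'\equiv0$).

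\textbf{Main obstacle.} The hard step is Step 2 in the Lorentzian case. There the normalization $g(\gamma',\gamma')=-1$ and the term $e_0$ couple the fiber equations to $t'$ and $e_0$. We must check carefully that these couplings appear only through scalar coefficients multiplying $x'$ and $e$, so that they never introduce a new fiber direction. In the Galilean case the connection term is $\nabla_{\partial_t}X=\frac{f'}{f}X$ plus a possible term along the Galilean observer field, and we must verify the analogous structure separately. This divergence is the one mentioned in the introduction, and it is why we expect to treat the two cases separately. Once closure of the linear system in $V$ is established, uniqueness of ODE solutions together with the total geodesicity of $I\times\Pi$ proves Theorem~\ref{maintheorem}.
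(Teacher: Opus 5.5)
Your strategy works and differs from the paper's. However, Step~1 uses the wrong transport law in the Lorentzian case, and this must be fixed before Step~2 applies there.

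\textbf{The correction.} In the Lorentzian setting the UD condition is Fermi--Walker parallelism of the acceleration direction, not Levi-Civita parallelism. Take $E=\sinh(\alpha)L+\cosh(\alpha)N$. This also covers generalized UD curves on which $a$ vanishes somewhere, where $A/|A|$ is undefined. Then $\frac{D\gamma'}{ds}=aE$ and $\frac{DE}{ds}=a\gamma'$. If $E$ were parallel, differentiating $g(\gamma',E)=0$ would give $a=0$, so your condition only captures geodesics. The hedge $\nabla_{\gamma'}A\wedge A=0$ fails for the same reason and should be dropped. The correct fiber equation is
\[
e'+\frac{f'}{f}\bigl(t'e+e_0x'\bigr)=a\,x'.
\]
The extra term is again a scalar multiple of $x'$, so your invariance argument goes through. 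State explicitly that the scalar functions $a(s)$, $t'(s)$, $e_0(s)$ and $\frac{f'}{f}(t(s))$ are frozen along the given solution. Then $(x',e)$ solves a genuinely linear system with continuous coefficients on all of $J$, and uniqueness gives $x'(s)\in V$ for every $s\in J$. With this, your ``main obstacle'' disappears: every connection term acts on fiber vectors by scalar multiplication. In the Galilean case there is not even a $\partial_t$-term, since $\nabla_{\overline{V}}\overline{W}=\overline{\nabla^{\mathbb{R}^n}_VW}$.

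\textbf{Comparison with the paper.} In the Galilean case the paper integrates explicitly. It obtains $N=\frac{f(t_0)}{f}\overline{w}$ and a closed formula for $P^{\partial_t}L$ lying in a fixed two-dimensional span. In the Lorentzian case it shows that $\mathcal{S}=\mathrm{Span}\{\partial_t,L,N\}$ is parallel along $\gamma$, because $\frac{D}{dt}\partial_t=u'P^{\partial_t}\gamma'$. Hence $\mathcal{S}^\perp$ is spanned by parallel fields orthogonal to $\partial_t$, which must have the form $\frac{1}{f}C_i$ with $C_i\in\mathbb{R}^n$ constant, so the fiber part of $\mathcal{S}$ is a fixed subspace. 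Your single ODE-invariance argument replaces both the explicit integration and the orthogonal-complement step. It treats the two settings uniformly and isolates the only structural input: the connection acts on fiber vectors by scalars, thanks to the flat fiber and the warped form. The paper's route instead gives explicit formulas in the Galilean case and a conceptual parallel-distribution picture in the Lorentzian one. Both arguments end with $\gamma\subset I\times\Pi$, where $\Pi$ is the affine plane through the initial point spanned by the fiber parts of the initial velocity and acceleration direction, and with the same check that $I\times\Pi$ is totally geodesic.
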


In the next section, we briefly review the necessary elements of Galilean and Lorentzian geometries, as well as the definitions of Robertson--Walker spacetimes in both settings. We also recall the unchanged direction notion together with its characterization in terms of the initial velocity and acceleration. In Section \ref{section3}, we present the proof of Theorem~\ref{maintheorem}, which differs depending on whether the framework is relativistic or non-relativistic.

\section{Setup}
\label{section:preliminaries}
A connected smooth manifold $M^{n+1}$ is called a \emph{spacetime} if it is endowed with one of the following two geometric structures:
\begin{itemize}

	\item A \emph{Galilean} structure, $(M,\Omega, h, \nabla)$, where $\Omega\in \Lambda^1(M)$ is a non-vanishing one-form, $h$ is a positive definite metric defined on the vector bundle given by the distribution $\text{An}\Omega = \{v\in TM\, : \, \Omega(v)=0\}$, and $\nabla$ is a Galilean connection, i.e., an affine connection on $M$ parallelizing $(\Omega,h)$ \cite{Bernal}. 
	Given $p\in M$, $(\text{An}\Omega_p,h_p)$ is the \emph{absolute space} at $p$ and $\Omega_p$ is the \emph{absolute clock} at $p$. 
	In line with standard approaches in physics, we shall assume that $\nabla$ is torsion-free, which implies that $d\Omega = 0$ \cite[Lemma 13]{Bernal}. Consequently, the distribution $\mathrm{Ann}\,\Omega$ is integrable.
	The associated spacelike leaves $\Sigma$ are called \emph{(absolute) restspaces} and the spacetime locally admits absolute simultaneity (see \cite{GFPR} and references therein). 
	
	\item A \emph{Lorentzian} structure $(M,g)$, where $g$ is a Lorent\-zian metric with signature $(-,+,\ldots,+)$ and a fixed time orientation. In this case, we consider the Levi--Civita connection $\nabla^g$, which is torsionless. This is the geometric framework in which relativistic physics is developed, where time or simultaneity has no absolute character but is relative to each field of
	observers. In particular, a field of observers $Z$ is locally synchroni\-zable with respect to its proper time if its orthogonal distribution $Z^\perp$ is integrable,
	and so, each of the spacelike leaves is a restspace associated with $Z$ \cite[p. 358]{ONeill1983}.
	
\end{itemize}

Each point of $M$ corresponds to a spacetime \emph{event}. A vector $v \in T_pM$
is said to be \emph{timelike} if $\Omega(v)\neq 0$ or $g(v,v)<0$  and \emph{spacelike} if $\Omega(v) = 0$ or $g(v,v)>0$, respectively. In the Lorentzian setting, $v$ is \emph{lightlike} if $g(v,v) = 0$, but this notion does not make sense in the Galilean context (infinite speed of light assumption). A timelike vector is \emph{future-pointing} if $\Omega(v)>0$ or $v$ is in the future cone of $(T_pM, g_p)$ (\emph{past-pointing} otherwise) and it is \emph{unitary} if $\Omega(v) = 1$ or $g(v,v) = -1$, respectively. Note that we assume time-oriented Lorentzian structure. More details may be found in \cite{Bernal, SachsWu1977}.

An \emph{observer} is a smooth curve $\gamma\colon J\subset \mathbb{R} \to M$ such that its tangent vector field (\emph{proper velocity}) is unitary and future-pointing timelike. Its parameter $s\in J$ corresponds to the \emph{proper time}, i.e., the time measured by its clock. The physical space observed by $\gamma$ at a proper time $t$ is the associated absolute restspace at $\gamma(t)$ (Galilean case) or the associated local restspace in $\gamma(t)$ with respect to $\gamma'(t)$ (Lorentzian case). Its covariant derivative $\frac{D\gamma\,'}{dt}$ is the (\emph{proper}) \emph{acceleration}, which can be empirically measured by an accelerometer \cite[Sect. 1]{UAM}.
A vector field $Z\in \mathfrak{X}(M)$ is called \emph{field of observers} if its integral curves are observers. Then, for each $X\in\mathfrak{X}(M)$, the spacelike projection along $Z$, $P^Z$, is defined as: 
\begin{equation*}
	\begin{aligned}
		P^Z X ={} X - \Omega(X) Z  \quad \text{(Galilean case)},\qquad
		P^Z X ={} X - g(X,Z) Z \quad \text{(Lorentzian case)}.
	\end{aligned}
\end{equation*}

A submanifold $S \subset M$ is said to be totally geodesic if every geodesic
of $M$ whose initial point lies in $S$ and whose initial velocity is tangent
to $S$ remains entirely contained in $S$.

\subsection{ Robertson--Walker spacetimes with flat fibers}
In this section, we review the definitions and fundamental equations for the connection in (Galilean and Lorentzian)
Robertson?Walker spacetimes, which constitute the ambient spaces in this work.
These structures appear as the ambient framework in Theorem \ref{maintheorem}. 

Consider $I\subseteq \mathbb{R}$ an open real interval, $U\subseteq\mathbb{R}^n$ be an open subset of $\mathbb{R}^n$, $\langle\cdot,\cdot\rangle$ the Euclidean metric of $\mathbb{R}^n$ and a positive function $f\in C^{\infty}(I)$.
\subsubsection{Galilean Robertson-Walker spacetimes}
A Galilean spacetime $(M,\Omega, h, \nabla)$ is called \emph{Galilean Robertson-Walker} (RW) spacetime with flat restspace if $M=I\times U$, $\Omega = d \pi_I$, $g$ is the restriction to the distribution $\text{An}\Omega$ of the following (degenerate) metric on $M$:
\begin{equation}
	\overline{h} = (f\circ\pi_I)^2 \pi_U^* \langle\cdot,\cdot\rangle\,,
\end{equation}
where $\pi_I, \pi_U$ are the canonical projections onto the open interval $I$ and $U$, respectively, and $\nabla$ is the unique symmetric Galilean connection on $M$ such that
\begin{equation}\label{eq:GR conditions}
	\nabla_{\partial_t} \partial_t=0 \quad \text { and } \quad \operatorname{Rot} \partial_t=0\,,
\end{equation}
where $\partial_t=\partial / \partial t$ is the global coordinate vector field associated with $t:=\pi_I$. Abusing the notation, we identify $\overline{h}$ and $h$, and we write $h=f^2 \pi_U^* \langle\cdot,\cdot\rangle$.

Explicitly, we can give the following expression for $\nabla$.
If $V,W\in \mathfrak{X}(U)$, then
\begin{equation}\label{eq:GRW connection}
	\begin{split}
		\nabla_{\partial_t} \overline{V} ={} \nabla_{\overline{V}} \partial_t = u'\, \overline{V}\,,\qquad
		\nabla_{\overline{V}} \overline{W} ={} \overline{\nabla^{\mathbb{R}^{n}}_{V} W} ,
	\end{split} 
\end{equation}
where $u:=\log f$ and where the bar over a vector  in $T_xU$ denotes its lift to a point
$p=(t,x)\in M$, namely, the unique vector in $T_pM$, whose projection via $\pi_U$ is tangent to 
$\mathbb{R}^n$ and $\nabla^{\mathbb{R}^n}$ is the standard Levi-Civita connection in $\mathbb{R}^n$.

\subsubsection{Relativistic Robertson-Walker spacetimes}

A \emph{Lorentzian Robertson-Walker} (RW) spacetime with flat fiber is a product manifold $I\times U$ endowed with the following Lorentzian metric
\begin{equation}\label{metrica}
	g=\pi_{I}^{*}(-dt^{2})+f^{2}(\pi_{I})\pi_{U}^{*}(\langle\,,\rangle)\equiv -dt^{2}+f^{2}\langle\;,\;\rangle\,.
\end{equation}
Thus, $(M, g)$ is a Lorentzian warped product with base $(I,-dt^2)$, fiber $(U,\langle\cdot,\cdot\rangle)$ and warping function $f$, and it is usually denoted by $I\times_{f}U$. 

\noindent If we also denote by $u=\log f$, then the Levi--Civita connection of $M$, $\nabla$, satisfies 
\begin{equation}\label{connection}
	\begin{aligned}
		\nabla_{\partial_t}\partial_t = 0,\qquad
		\nabla_{\partial_t}\overline{V}
		= \nabla_{\overline{V}}\partial_t = u'\,\overline{V}, \qquad
		\nabla_{\overline{V}}\overline{W}
		= \overline{\nabla^{\mathbb{R}^n}_V W}
		- u'\,g( \overline{V},\overline{W})\,\partial_t ,
	\end{aligned}
\end{equation}
for all $V,W \in \mathfrak{X}(U)$.


\subsection{Unchanged-direction trajectories}

Finally, we recall the notion of UD motion and show a global integration formula for UD observers, following the results in \cite{UDM} and \cite{GalileanUD}. 

Let $M$ be a spacetime and let $\gamma\colon J\to M$ be an observer. It obeys a \emph{proper Unchanged Direction} (proper UD) motion if either $\gamma$ is free falling or the proper acceleration does not vanish at any instant and its direction, $U_\gamma=\Big|\frac{D\gamma'}{dt}\Big|^{-1}\frac{D\gamma'}{dt}$, remains constant along its trajectory, i.e.,

\begin{equation}
	\label{eq:UD}      
	\frac{DU_\gamma}{dt} = 0\,\quad \text{(Galilean case)},\qquad  \frac{\widehat{D}U_\gamma}{dt} = 0\,\quad \text{(Lorentzian case),}
\end{equation}
where $\frac{D}{dt}$ and $\frac{\widehat{D}}{dt}$ are the Galilean and Fermi-Walker
covariant derivatives, respectively (for details, see \cite[Prop. 2.2.2]{SachsWu1977}).  

A generalization of UD motion allowing the proper acceleration to vanish was developed in \cite[Section II]{UDM} and \cite[Section 3]{GalileanUD}). Here we recall a useful characterization of general UD trajectories that will be used throughout this paper. Given a prescribed acceleration function $a\colon J \to \mathbb{R}$, with $a(t_0) \ne0$ for some $t_0\in J$, the following initial values can be posed in $t_0$,

\begin{equation}   \label{eq:initial values}     
	\begin{aligned}
		\gamma(t_0) = p \in M,\qquad  
		\gamma'(t_0) = v \in T_pM,  \qquad
		\frac{D\gamma'}{dt}(t_0) = a(t_0) w,
	\end{aligned}
\end{equation}
\noindent where $\Omega(v) = 1$, $\Omega(w)=0$ and $\langle w,w\rangle=1$ (Galilean case) or $-g(v,v)=1=g(w,w)$ (Lorentzian case). An observer $\gamma\colon J\to M$ obeys an UD motion with acceleration $a(t)$ and initial values (\ref{eq:initial values}) if and only if it satisfies  
\begin{subequations}
	\label{eq:global expression}
	\begin{align}
		\label{eq:global expression G}
		\gamma'(t) ={}& L(t) + \alpha(t) N(t)\,  \qquad  \text{(Galilean case),} \\
		\label{eq:global expression L}
		\gamma'(t) ={}& \cosh{(\alpha(t))} L(t) + \sinh{(\alpha(t))} N(t) \,\quad \text{(Lorentzian case)},
	\end{align}
\end{subequations}
where $\alpha(t):=\int_{t_0}^t a(s) ds$ and $L,N$ are parallel vector fields along $\gamma$ such that $L(t_0)=v$ and $N(t_0)=w$.

\section{Proof of Theorem \ref{maintheorem} }\label{section3}

\subsection{Galilean case}

Let $(M=I\times U,dt,h=f^2\pi_{U}^{*}\langle\cdot,\cdot\rangle,\nabla)$ be a Galilean RW spacetime, $U\subset\mathbb{R}^n$, and let $S\subset M$ be a smooth connected submanifold. For each $p\in S$, define the spacelike vector subspace in $T_pS$, 
\begin{equation*}
	\mathcal{R}_p := T_pS\cap \text{An}_p(dt)\,,
\end{equation*}
and denote by $\mathcal{R}_p^\perp$ the spacelike orthogonal subspace with respect to the absolute space at $p$, $(\text{An}_p(dt), h_p)$.
Assume that $S$ is a timelike submanifold, i.e., $T_pS \ne \mathcal{R}_p$, for all $p\in S$.

Hence, given $X,Y\in \mathfrak{X}(S)$, the \emph{induced Galilean connection} can be defined on $S$, $\nabla^S_X Y$, satisfying
$$\nabla_X Y = \nabla^S_X Y + \mathrm{II}(X,Y),$$
where $\nabla^S_X Y \in \mathfrak{X}(S)$ and $\mathrm{II}(X,Y) \in \Gamma(\mathcal{R}^\perp)$. Therefore, we obtain an induced connection on $S$, $\nabla^S$, and a \emph{second fundamental form}, $\mathrm{II}$, analogous to the corresponding notion in semi-Riemannian geometry. Making use of $\mathrm{II}$, we can characterize the totally geodesic submanifolds.
\begin{lem}
	Given a Galilean Robertson-Walker spacetime $M=I\times U$, $U\subset\mathbb{R}^n$, a $k$-dimensional submanifold $S$ of $M$ is totally geodesic in $M$ if and only if either $S$ is a totally geodesic submanifold of a leaf of $M$, or $S$ is a subset of a k-plane tangent to $\partial_t$.
\end{lem}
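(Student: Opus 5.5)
The plan is to work in the global coordinates $(t,x^1,\dots,x^n)$ of $M=I\times U$ and to reduce everything to the explicit geodesic equations. From \eqref{eq:GRW connection} with coordinate fields $\partial_i$, a curve $(t(\sigma),x(\sigma))$ is a geodesic if and only if
\begin{equation*}
t''=0,\qquad x''+2u'(t)\,t'\,x'=0 .
\end{equation*}
Two consequences drive the proof. First, spacelike geodesics ($t'=0$) are straight lines inside a single leaf $\{t\}\times U$. Second, a unit timelike geodesic through $(t_0,x_0)$ with spatial velocity $w$ is $t\mapsto\big(t,\;x_0+\varphi(t)\,w\big)$, where $\varphi(t)=\int_{t_0}^t f(t_0)^2f(s)^{-2}\,ds$. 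So its spatial projection is a segment of a straight line, traversed with reparametrization $\varphi$. In the second step I will use the standard fact that $S$ is totally geodesic if and only if $\mathrm{II}\equiv0$, i.e.\ $TS$ is $\nabla$-parallel along curves in $S$.

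For the \emph{if} direction, both model sets are checked directly with the formulas above. If $S$ is a totally geodesic submanifold of a leaf and $v\in T_pS$, then $v$ is spacelike, so its geodesic is the straight line in the flat leaf $\{t_0\}\times U$, and it stays in $S$. If $S\subset I\times(A\cap U)$ with $A$ an affine $(k-1)$-plane, then a tangent vector has the form $\lambda\partial_t+d$ with $d$ parallel to $A$. The geodesic it generates has spatial part $x_0+\psi(\sigma)d$, which remains in $A$ since $x_0\in A$.

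For the \emph{only if} direction, I first split by type. The set $\{p\in S: T_pS\subset \mathrm{An}_p(dt)\}$ is closed. I claim it is also open when $S$ is totally geodesic, so by connectedness $S$ is either entirely spacelike or entirely timelike.

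\emph{Spacelike case.} $dt|_S=0$, so $S$ lies in one leaf. Since the geodesics of the leaf are geodesics of $M$, $S$ is totally geodesic there.

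\emph{Timelike case.} Let $\mathcal R_p$ have dimension $k-1$ and pick $v=\partial_t+w_0\in T_pS$.
\begin{itemize}
\item Parallel transport of a spatial vector along any curve only rescales its coordinate components, since $R'+u't'R=0$. Because $\mathrm{II}=0$, the coordinate direction space $D\subset\mathbb R^n$ of $\mathcal R_q$ is therefore constant along $S$.
\item The slice $S\cap(\{t\}\times U)$ is totally geodesic in the flat leaf, so it is an open piece of an affine plane with direction $D$.
\item The timelike geodesics from $p$ with initial velocities $\partial_t+w_0+d$, $d\in D$, sweep out the set $\{(t,\,x_0+\varphi(t)w_0+D)\}$.
\item Matching dimensions ($k$ on both sides), $S$ is locally this set, and connectedness makes it global.
\end{itemize}
Finally, I must show that $w_0$ can be chosen in $D$, i.e.\ that $\partial_t\in T_pS$. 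Then $S\subset I\times(x_0+D)$, which is the asserted $k$-plane tangent to $\partial_t$.

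The last point is where I expect the real obstacle, and I am honestly unsure it can be carried out. Computing the geodesic from any point of $\Sigma_{w_0}:=\{(t,\,x_0+\varphi(t)w_0+D)\}$ with tangent initial velocity $\partial_t+\varphi'(t)w_0+d$, its position at later time $t'$ is $x_0+\varphi(t')w_0+(\text{element of }D)$. This suggests that $\Sigma_{w_0}$ is itself totally geodesic even when $w_0\notin D$ and $f$ is nonconstant. If so, the literal statement needs $w_0\in D$ to be forced by some further hypothesis. What I would try first is to recompute $\nabla_X Y$ for the frame $X=\partial_t+\varphi'w_0$, $Y\in D$ to look for a nonzero $\mathrm{II}$. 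Failing that, I would read ``$k$-plane tangent to $\partial_t$'' up to the Galilean change of frame $x\mapsto x-\varphi(t)w_0$. That change preserves $dt$, $h$ and $\nabla$, and maps $\Sigma_{w_0}$ to $I\times(x_0+D)$, after which the argument above closes.
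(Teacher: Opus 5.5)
Your suspicion is right, so the final step (forcing $w_0\in D$) is a gap that no argument can fill. The ``only if'' direction is false as stated, and on this point your analysis is more accurate than the paper's proof.

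\textbf{Why $\Sigma_{w_0}$ is totally geodesic.} Take $Y=\partial_t+\varphi'(t)\,\overline{w_0}$ and constant fields $\overline d,\overline{d'}$ with $d,d'\in D$; all are tangent to $\Sigma_{w_0}$ along it. Since $\varphi'=f(t_0)^2f^{-2}$ gives $\varphi''=-2u'\varphi'$, one gets $\nabla_YY=(\varphi''+2u'\varphi')\,\overline{w_0}=0$, $\nabla_{\overline d}Y=\nabla_Y\overline d=u'\,\overline d$ and $\nabla_{\overline d}\overline{d'}=0$. Hence $\mathrm{II}\equiv0$ on $\Sigma_{w_0}$ also when $w_0\notin D$, and your first fallback (looking for a nonzero $\mathrm{II}$) will fail.

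The simplest counterexample is $k=1$. Every free-falling observer $t\mapsto(t,x_0+\varphi(t)w_0)$ with $w_0\neq0$ is a one-dimensional totally geodesic submanifold contained in no leaf and in no vertical line. For constant $f$, any timelike affine $k$-plane not containing $\partial_t$ is another counterexample.

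\textbf{Where the paper's proof goes wrong.} The paper reaches ``$\partial_t$ must be a section of $TS$'' only through its last assertion, that $\mathrm{II}_\eta=0$ forces $P^{\partial_t}X\perp\eta$ and $P^{\partial_t}Y\perp\eta$. Its own expansion is $\nabla_X(\partial_t+\widehat W)=u'\widehat V+\overline{\nabla^{\mathbb{R}^n}_VW}+a_0\nabla_{\partial_t}\widehat W$. The choice $X=Y=\partial_t+\varphi'\overline{w_0}$ kills the middle term and makes the other two equal to $u'\varphi'\,\overline{w_0}$ and $-u'\varphi'\,\overline{w_0}$. They cancel even though $\widehat V=\widehat W\notin\mathcal R$.

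\textbf{What your argument does prove.} It gives the correct classification. A connected totally geodesic $S$ either lies in a leaf and is totally geodesic there, or is an open piece of some $\Sigma_{w_0}=\{(t,x_0+\varphi(t)w_0+d): d\in D\}$. The vertical planes $I\times(x_0+D)$ are exactly the case $w_0\in D$.

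Your boost $\Phi(t,x)=(t,x-\varphi(t)w_0)$ is indeed an automorphism of $(dt,h,\nabla)$. It fixes spatial coordinate vectors, hence $h$. It pushes $\partial_t$ to $\partial_t-\varphi'\overline{w_0}$, which like $\partial_t$ is geodesic and satisfies $\nabla_{\partial_i}(\partial_t-\varphi'\overline{w_0})=\nabla_{\partial_t-\varphi'\overline{w_0}}\partial_i=u'\partial_i$. It maps $\Sigma_{w_0}$ onto $I\times(x_0+D)$. Because $\Phi$ moves the distinguished field $\partial_t$, and generally does not preserve $I\times U$, state this as a corrected lemma rather than as a reading of the given one.

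\textbf{Smaller points to tighten.}
\begin{itemize}
\item Openness of the spacelike locus needs one more line: near $p$ the set $S$ is the exponential image of a neighbourhood of $0$ in $T_pS$, and spacelike geodesics stay in their leaf.
\item The passage from local to global needs one more line: use $\varphi'(t_1)\int_{t_1}^{t}f(t_1)^2f(s)^{-2}\,ds=\varphi(t)-\varphi(t_1)$. This shows that the model set built at any point of $\Sigma_{w_0}$ is $\Sigma_{w_0}$ itself.
\item ``Totally geodesic'' must be read locally throughout. Under the paper's ``remains entirely contained in $S$'', a small disc in a vertical plane already fails the ``if'' direction.
\end{itemize}

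The main theorem is unaffected. Its Galilean case only uses the ``if'' direction, applied to $\gamma(t_0)+\mathrm{Span}(\{\partial_t,\overline v,\overline w\})$.
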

\begin{proof}
	Since the connection $\nabla$ of each restspace agrees with the connection of the fiber $U$, a submanifold $S \subset \{t\}\times U$ is totally geodesic if and only if it is totally geodesic in that slice. Henceforth, we assume that $S$ is timelike. Then, given $X,Y\in \mathfrak{X}(S)$, there exist $a_0,a_i,b,b_j\in\mathcal{C}^\infty(S)$, $i,j=1,\cdots,n$, such that
	\begin{subequations}
		\begin{align*}
			&X=a_0\partial_t+\widehat{V} \quad\text{with}\quad \widehat{V}:=P^{\partial_t}X=\sum_{i}a_i\overline{\partial_{x_{i}}},\\\nonumber\qquad
			&Y=b\big(\partial_t+\widehat{W}\big) \quad\text{with}\quad \widehat{W}:=\sum_{j}b_j\overline{\partial_{x_j}},\nonumber
		\end{align*}
	\end{subequations}
	where $\lbrace{\partial_{x_i}\rbrace}_i$ are the standard local coordinate vector fields of $U\subset\mathbb{R}^n$. We can derive
	\begin{equation}\label{eq10}
		\nabla_X Y=X(b)\big(\partial_t+W\big)+b\nabla_X \big(\partial_t+W\big).
	\end{equation}
	Denoting $V=\sum_{i}a_i\partial_{x_{i}}$ and $W=\sum_{j}b_j\partial_{x_j}$, the second term can be computed through (\ref{eq:GRW connection}) as follows
	\begin{eqnarray}
		\nabla_X(\partial_t+\sum_j b_j\overline{\partial_{x_j}})&=&\nabla_V\partial_t+a_0\sum_j (\partial_tb_j)\overline{\partial_{x_j}}+a_0\sum_j u'b_j\overline{\partial_{x_j}}\nonumber\\
		&=&u'\widehat{V}+\overline{\nabla^{\mathbb{R}^n}_V W}+a_0\big(\sum_j(\partial_tb_j+u'b_j)\overline{\partial_{x_j}}\nonumber\\
		&=&u'\widehat{V}+\overline{\nabla^{\mathbb{R}^n}_V W}+a_0\nabla_{\partial_t} \widehat{W}.
	\end{eqnarray}
	
	Taking into account that the first term of (\ref{eq10}) is tangent to $S$, for any $\eta\in\Gamma(\mathrm{An}(dt))$ verifying $\eta\in\Gamma(\mathcal{R}^\perp)$, the second fundamental form associated with $\overline{\eta}$ satisfies 
	\begin{eqnarray}
		\mathrm{II}_{\eta}(X,Y)&=&b\,h(\nabla_X (\partial_t+\widehat{W}),\eta)=b\,u'\,h(\widehat{V},\eta)+b\,h(\nabla^{\mathbb{R}^n}_V W,\eta)+a_0b\,h(\widehat{W} ,\eta)\nonumber\\&=&bf\,f'\langle V,\eta\rangle+bf^2\,\mathrm{II}_\eta^{U}(V,W)+a_0b\,\partial_t\big(h(P^{\partial_t}Y,\eta)\big)-a_0b\,u'\,h\big(\widehat{W},\eta\big)\nonumber\\
		&=&bf\,f'\langle V,\eta\rangle+a_0b\,f^2\langle W,\eta\rangle\,\partial_t\big(f\,\mathrm{log}(f\,\langle W,\eta\rangle\big).
	\end{eqnarray}
	Therefore, $S$ is totally geodesic in $M$ if and only if $P^{\partial_t}X\perp \eta$ and $P^{\partial_t}Y\perp \eta$, i.e., $\partial_t$ must be a section of $TS$.
	
\end{proof}

Let $\gamma:J\subset\mathbb{R}\longrightarrow M$ be a proper UD observer. Making use of (\ref{eq:global expression G}), $P^{\partial_t}\gamma'=P+\alpha\,N$, being $P:=P^{\partial_t}L$, and then
\begin{equation}\label{gammaprima}
	\gamma'=\partial_t|_{\gamma}+P+\alpha\,N.
\end{equation}
Taking the usual coordinate basis of $\mathbb{R}^n$, $\lbrace \partial_{x_i}\rbrace$, and the local coordinates $(t,x^{i})$ in $M$, we may express $N=N^{i}\partial_{x_i}$. Thus, using (\ref{eq:GR conditions}) and (\ref{eq:GRW connection}),
$$0=\frac{D N}{dt}=(\partial_tN^{i}+u'\,N^{i})\partial_{x_i}.$$
As a consequence, $N(t)=\frac{f(t_0)}{f(t)}\,\overline{w}$, where $w\in\mathbb{R}^n$, and $t_0\in J$.

On the other hand, we have the following
\begin{equation*}
		\frac{DP}{dt}=(\partial_t P^{i}+u'\,P^{i})\partial_{x_i},\qquad
		\frac{DP}{dt}=-\frac{D}{dt}\partial_{t}|_{\gamma}=-u'\,(P+\alpha\,N).
\end{equation*}
By equating both expressions and integrating the resulting equation, we obtain
$$P(t)=\frac{f(t_0)}{f(t)}\left(f(t_0)\overline{v}-\bigg(\int_0^{t}f'(s)\alpha(s)ds\bigg)\,\overline{w}\right),\qquad v,w\in\mathbb{R}^n.$$
From (\ref{gammaprima}) and the previous result, we can state that $\gamma(J)\subset S$, where $S$ is a subset of a $k$-plane, ($k=1,2\,\mathrm{or}\,3$),
$$S:=\gamma(t_0)+\mathrm{Span}\left(\lbrace \partial_t,\overline{v},\overline{w}\rbrace\right).$$

\subsection{Relativistic case}
Let $(M=I\times_f U,g)$, $U\subset\mathbb{R}^n$, be a Lorentzian Robertson-Walker spacetime, and $\gamma:J\subset\mathbb{R}\longrightarrow M$ a non-free falling proper UD observer.
Consider $\mathcal{S}$ the distribution along $\gamma$ defined as follows,
$$\mathcal{S}(t) := \mathrm{Span}\left(\{\partial_t|_{\gamma(t)},\, L(t),\, N(t)\}\right)
= \mathrm{Span}\left(\{\partial_t|_{\gamma(t)},\, P^{\partial_t} L(t),\, P^{\partial_t} N(t)\}\right).$$

\noindent Since $P^{\partial_t} L \in \mathfrak{X}(\gamma)$ and $P^{\partial_t} L \perp \partial_t|_{\gamma}$,
we can find vector fields $V,W\in\mathfrak{X}(U)$ such that $P^{\partial_t} L=\overline{V}$ and $P^{\partial_t} N=\overline{W}$.
Hence, using suitable extensions,
\[
\nabla_{P^{\partial_t} L}\partial_t
= u'\,P^{\partial_t} L,
\qquad
\nabla_{P^{\partial_t} N}\partial_t
= u'\,P^{\partial_t} N.
\]

\noindent Therefore, using (\ref{connection}),
\begin{align*}
	\frac{D}{dt}\partial_t|_{\gamma}
	&= \cosh(\alpha)\,\nabla_L \partial_t
	+ \sinh(\alpha)\,\nabla_N \partial_t 
	= \cosh(\alpha)\,u'\,P^{\partial_t} L
	+ \sinh(\alpha)\,u'\,P^{\partial_t} N \\
	&= u'\,P^{\partial_t} \gamma'(t) \in \mathcal{S}(t).
\end{align*}

\noindent Consequently, the distribution $\mathcal{S}(t)$ is preserved by parallel transport
along $\gamma$.

Now, consider the orthogonal vector bundle \( \mathcal{S}^\perp \) and a local orthonormal frame  
\(\{\chi_i(t)\}_{i=1}^{\ell}\) expanding \( \mathcal{S}^\perp(t) \) along the curve \( \gamma \), and such that  
\[
\frac{D \chi_i}{dt} = 0, \quad \forall i = 1, \ldots, \ell.
\]

\noindent Since \( \chi_i \perp \partial_t \), we can choose local coordinates \( (t, x^i) \) such that  
$\chi_i = \sum_{j=1}^{n} a_{ij} \partial_{j}|_\gamma$, $a_{ij}\in\mathcal{C}^\infty(J)$. Thus,  
\[
0 = \frac{D \chi_i}{dt} = \sum_{j} ( a'_{ij} + u' a_{ij} ) \partial_{j}|_\gamma  
\Rightarrow a_{ij}(t) = \frac{c_{ij}}{f(t)}, \quad c_{ij} \in \mathbb{R}.
\]
\noindent Therefore,  
\[
\chi_i(t)= \frac{1}{f(t)} C_i,\quad \quad C_i \in \mathbb{R}^n , \qquad i = 1, \ldots, \ell.
\]
\noindent Consequently, we have

\[
\mathcal{S}^\perp(t) = \mathrm{Span} \left(\left\{ \frac{1}{f(t)} C_i \right\}_{j=1}^{\ell}\right) = \text{Span}\left( \left\{ C_j\right\}_{j=1}^{\ell}\right), \quad C_j \in \mathbb{R}^n.
\]
Hence, \( \mathcal{S}(t) \) is orthogonal to \( \ell \) linearly independent constant vector fields in \( \mathbb{R}^n \),  
so the projection onto the fiber yields an \( (n - \ell) \)-dimensional plane. Then, the submanifold integrating \( \mathcal{S}(t) \) is an \( (n + 1 - \ell) \)-dimensional plane in $I \times U \subset\mathbb{R}^{n+1}$, which can be expressed as  
\[
\mathcal{S}(t) = \mathrm{Span} \left(\{ \partial_t, K_1, K_2 \}\right),\qquad K_1,K_2\in\mathbb{R}^n.
\]

By means of formulae (\ref{connection}), one can easily verify that the subset of a \(k\)-plane \(S\) (\(k =1, 2\) or \(3\)) is a timelike totally geodesic submanifold in \(M\).

\backmatter

%
%
%

%
%

\section*{Funding}
The three authors are partially supported by Spanish MICINN project PID2021-126217NB-I00. In addition, this research was supported by an FPU grant from the Spanish MICIU to the third author.

\section*{Author contribution}
All authors have accepted responsibility for the entire content of this manuscript and approved its submission. 

\section*{Data availability}
Not applicable.

\end{document}